\theoremstyle{plain}
\newtheorem{thm}{Theorem}[section]
\newtheorem{lem}[thm]{Lemma}
\newtheorem{obs}[thm]{Observation}
\newtheorem{prop}[thm]{Proposition}
\newtheorem*{claim*}{Claim}
\newenvironment{thmABC}[1]
  {\innerthmABC}
  {\endinnerthmABC}
\theoremstyle{definition}
\newtheorem{ex}[thm]{Example}
\theoremstyle{remark}
\newtheorem{rem}{Remark}[thm]
\newcommand{\R}{\mathbb{R}}
\newcommand{\K}{\mathbb{K}}
\newcommand{\RR}{\mathcal{R}}
\begin{document}
\baselineskip=17pt


\title{Separately regular and separately Nash functions}

\author[J. Banecki]{Juliusz Banecki}
\address{Faculty of Mathematics and Computer Science\\
Jagiellonian University\\
ul. Lojasiewicza 6\\
30-348 Krakow, Poland}
\email{juliusz.banecki@student.uj.edu.pl}

\date{}

\begin{abstract}
We strengthen certain known results saying that separately regular functions are rational and separately Nash functions are semialgebraic. The approach presented here unifies and highlights the similarities between the two problems.
\end{abstract}

\subjclass[2020]{Primary 14A99; Secondary 14P20}

\keywords{regular function, semialgebraic function, Nash function}
\maketitle

\section{Introduction}
Given three sets $X,Y,Z$, a function $f:X\times Y\rightarrow Z$ and points $x_0\in X,y_0\in Y$ we denote the functions $x\mapsto f(x,y_0)$ and $y\mapsto f(x_0,y)$ by $f(\cdot,y_0)$ and $f(x_0,\cdot)$ respectively. By $\Gamma_f$ we denote the graph of $f$. 

Let $\K$ be an infinite field. By an affine variety over $\K$ we mean a Zariski closed subset $X$ of $\K^n$ for some $n$. A function $f:X\rightarrow \K$ is polynomial if it is the restriction of some polynomial function on $\K^n$. The algebra of polynomial functions on $X$ is denoted by $\mathcal{P}(X)$. The function $f$ is said to be regular if for every point $x\in X$ there exist two polynomial functions $P,Q\in\mathcal{P}(X)$ such that $Q(x)\neq 0$ and $Qf=P$ holds on the entire variety $X$. The algebra of regular functions on $X$ is denoted by $\RR(X)$. Note that by Hilbert's Nullstellensatz $\mathcal{P}(X)=\RR(X)$ if the field $\K$ is algebraically closed.

Following \cite[Definition 7.2]{Palais} an irreducible algebraic affine variety $X$ is said to be \emph{algebraically of the second category}, if whenever it is covered by a countable family of subsets
\begin{equation*}
    X=\bigcup_{i=1}^\infty X_i
\end{equation*}
at least one of the sets $X_i$ is Zariski dense in $X$.
\begin{ex}\label{alg_second_category}
According to \cite[Theorem 9.3]{Palais} if $\K$ is an uncountable algebraically closed field then every irreducible affine algebraic variety $X$ over $\K$ is algebraically of the second category. Also thanks to \cite[Lemma 10]{Gwozdziewicz} if $\K$ is any uncountable field then $X=\K^n$ is algebraically of the second category for every $n\geq 0$.
\end{ex}
\begin{rem}
It could be tempting to think that every irreducible variety over an uncountable field is algebraically of the second category. This turns out to be false as shown by the following example:

Choose $E$ to be the affine part of any non-singular elliptic curve in the Weierstrass form defined over $\mathbb{Q}$, for example we can take one given by the equation $P(x,y)=0$ where $P(x,y)=y^2-x^3-x$. Let $\K_0$ be any algebraic number field such that the curve has infinitely many $\K_0$-rational points (such a number field does exist as it suffices to extend $\mathbb{Q}$ by the coordinates of any non-torsion point of $E$). Define $\K$ as $\K_0$ extended by an uncountable number of algebraically independent elements:
\begin{equation*}
    \K:=\K_0(\{x_i\}_{i\in \R}).
\end{equation*}
Now consider $E_\K$, the curve given by $P(x,y)=0$ over $\K$. We claim that each point of $E_\K$ is $\K_0$-rational. Indeed, choose any point of $E_\K$ in the form $(Q_1,Q_2)$, where $Q_1$ and $Q_2$ are rational functions in the variables $\{x_i\}_{i\in \R}$. These functions depend only on finitely many of the variables, say $\{x_1,\dots,x_n\}$. As elliptic curves are known not to be unirational, the following mapping must be constant:
\begin{align*}
    \mathbb{C}^n&\dashrightarrow E \\
    (x_1,\dots,x_n)&\mapsto (Q_1(x_1,\dots,x_n),Q_2(x_1,\dots,x_n)).
\end{align*}
This shows that $Q_1,Q_2\in \K_0$, so the point is indeed $\K_0$-rational.

This in particular shows that $E_\K$ has countably infinitely many points. It may not be clear that $E_\K$ is irreducible, so choose $F$ to be some irreducible component of $E_\K$ consisting of infinitely many points. As it is countable, it is an irreducible variety over an uncountable field, which is not algebraically of the second category.
\end{rem}

The first main result of this paper is the following strengthening of a result contained in \cite{Gwozdziewicz}:
\begin{thmABC}{A}\label{thm_reg}
Let $\K$ be a field and let $X,Y$ be two affine irreducible varieties over $\K$. Suppose that $Y$ is algebraically of the second category. Let $A\subset X$ be a Zariski dense set. Suppose that a function $f:X\times Y\rightarrow \K$ satisfies the following conditions:
\begin{enumerate}
    \item $f(\cdot,y)$ is regular for every $y\in Y$,
    \item $f(x,\cdot)$ is regular for every $x\in A$.
\end{enumerate}
Then $f$ is rational, i.e. there exist two polynomials $P,Q\in\mathcal{P}(X\times Y)$ with $Q$ non-zero such that $Qf=P$ on $X\times Y$.
\end{thmABC}
The proof is based on ideas contained in \cite{Gwozdziewicz}. Related results for the field of real numbers and an arbitrary algebraically closed field are given in \cite[Theroem 5.1]{curve} and \cite[Theorem 8.3]{Palais}, respectively.

Let now $R$ be a real closed field. A subset $X\subset R^n$ for some $n$ is said to be semialgebraic if it is given by some sign conditions on a finite family of polynomials in $\mathcal{P}(R^n)$. Let $X\subset R^n$ be a semialgebraic set open in the Euclidean topology. A function $f:X\rightarrow R$ is said to be Nash if it is infinitely differentiable and semialgebraic. For an introductory treatment of the topic see \cite[Section 8]{RAG}. 

We prove the following version of Theorem \ref{thm_reg} for Nash functions:
\begin{thmABC}{B}\label{thm_Nash}
Let $R$ be an uncountable real closed field. Let $X$ and $Y$ be two semialgebraic open subsets of $R^{n_X}$ and $R^{n_Y}$ respectively, for some integers $n_X,n_Y\geq 0$. Assume that $X$ is semialgebraically connected. Let $A\subset X$ be a set dense in the Zariski topology on $R^{n_X}$. Suppose that a function $f:X\times Y\rightarrow R$ satisfies the following conditions:
\begin{enumerate}
    \item $f(\cdot,y)$ is Nash for every $y\in Y$,
    \item $f(x,\cdot)$ is Nash for every $x\in A$.
\end{enumerate}
Then $f$ is semialgebraic.
\end{thmABC}
This is a generalisation of some results contained in \cite{separatelyNash}. The proof is based on ideas coming from \cite{Gwozdziewicz}, however in this case much more work needs to be done than in the proof of Theorem \ref{thm_reg}.

\section{Proofs}
\subsection{The main lemma}
We need to state a lemma which will play a role both in the proof of Theorem \ref{thm_reg} and Theorem \ref{thm_Nash}. It seems that the lemma may be useful in proving other results similar to the ones in the paper, and for this reason it is stated in a rather general form.

We need to introduce some notation. Let $\K$ be a field and let $S$ be a set. We denote by $\K^S$ the vector space of all $\K$ valued functions on $S$. Let $V\subset \K^S$ be some vector subspace. We denote by $T^k V$ the $k$th tensor power of $V$, which naturally embeds in $\K^{S^k}$ with the embedding defined as the linear extension of
\begin{equation*}
    (Q_1\otimes \dots \otimes Q_k)(x_1,\dots,x_k):=Q_1(x_1) \cdots   Q_k(x_k).
\end{equation*}
Suppose further that $V$ admits a countable basis $(e_i)_{i\in\mathbb{N}}\in V$ (we take $\mathbb{N}=\{1,2,\dots\}$). Let $(e_i^\ast)_{i\in \mathbb{N}}\in V^\ast$ denote the duals. For an element $Q\in V\backslash\{0\}$ define
\begin{equation*}
    d(Q)=\max\{i: e_i^*(Q)\neq 0\}.
\end{equation*}
For a subset $F\subset S$ define 
\begin{equation*}
    c(F):=\min\{d(Q):Q\in V\backslash\{0\},\;Q\vert_F\equiv 0\}
\end{equation*}
(take $c(F)=\infty$ if the set is empty).

\begin{lem}\label{main_lemma}
Let $S$ be a set and $V\subset \K^S$ a vector space with countable basis $(e_i)_{i\in\mathbb{N}}$ as above. Let $n$ be a natural number. There exist $n$ elements $(P_i)_{i=1,\dots,n}$ of $T^{n-1}V$ such that for any subset $F\subset S$ satisfying $c(F)=n$ and any $n-1$ points $x_1,\dots,x_{n-1}\in F$ if we define
\begin{equation*}
    P:=\sum_{i=1}^n P_i(x_1,\dots,x_{n-1})e_i\in V
\end{equation*}
then $P\vert_F\equiv 0$. Moreover, for a given set $F$ the points $x_1,\dots,x_{n-1}$ can be chosen in such a way that $P\neq 0$.

\begin{proof}
Fix the set $F\subset S$. Consider the subspace of $\K^F$ spanned by the functions $e_1\vert_F,\dots,e_n\vert_F$. Its dimension is precisely $n-1$ as the only non-trivial relation between the vectors is given by the coefficients of an element $Q\in V\backslash\{0\}$ satisfying $d(Q)=n,Q\vert_F\equiv 0$.

It is a simple fact from linear algebra that we can find $n-1$ distinct points $x_1,\dots,x_{n-1}\in F$ such that
\begin{equation*}
    e_1\vert_{\{x_1,\dots,x_{n-1}\}},\dots,e_n\vert_{\{x_1,\dots,x_{n-1}\}}
\end{equation*}
span a subspace of $\K^{\{x_1,\dots,x_{n-1}\}}$ of dimension $n-1$. This precisely means that the matrix 
\begin{equation*}
    M:=\begin{bmatrix}
        e_1(x_1) & \dots & e_n(x_1) \\
        \vdots & \ddots & \vdots \\
        e_1(x_{n-1}) & \dots & e_n(x_{n-1})
    \end{bmatrix}
\end{equation*}
is of rank $n-1$. 

As already said, up to scalar multiplication the unique non-zero vector in the kernel of $M$ is given by $[e_i^\ast(Q)]_{i=1,\dots,n}$. On the other hand if by $\delta_i$ we denote the determinant of $M$ with $i$-th column removed times $(-1)^i$ then the vector $[\delta_i]_{i=1,\dots,n}$ also belongs to the kernel and is non-zero. Note that each $\delta_i$ can be written as
\begin{equation*}
    \delta_i=P_i(x_1,\dots,x_{n-1})
\end{equation*}
where $P_i$ is an element of $T^{n-1}V$ depending only on $n$ and $i$. Now if we consider
\begin{equation*}
    P:=\sum_{i=1}^n P_i(x_1,\dots,x_{n-1})e_i
\end{equation*}
then $P$ is equal to $Q$ scaled by some factor. This shows that it
satisfies $P\vert_F\equiv 0$. Note that this equation stays true for any choice of $x_1,\dots,x_{n-1}$ for which the matrix $M$ is of rank $n-1$, and for such a choice $P$ is a non-zero element of $V$. On the other hand, if $x_1,\dots,x_{n-1}$ are chosen in such a way that the rank of $M$ is smaller than $n-1$ then $P\equiv 0$ so the equation $P\vert_F\equiv 0$ stays true. 
\end{proof}
\end{lem}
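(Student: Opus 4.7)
The plan is to apply Cramer's rule to the matrix of basis values at $n-1$ chosen points of $F$, and to recognise the resulting signed minors as the evaluations of universal tensors $P_i \in T^{n-1}V$. First I would unpack the hypothesis $c(F)=n$ as follows: since no nonzero element of $V$ with $d \le n-1$ vanishes on $F$, the restrictions $e_1|_F,\dots,e_{n-1}|_F$ are linearly independent in $\K^F$; on the other hand there exists $Q \in V\setminus\{0\}$ with $d(Q)=n$ and $Q|_F\equiv 0$, and such $Q$ is unique up to scalar. So $e_1|_F,\dots,e_n|_F$ span an $(n-1)$-dimensional subspace of $\K^F$ whose unique relation has coefficient vector $(e_1^*(Q),\dots,e_n^*(Q))$.

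Given any points $x_1,\dots,x_{n-1} \in F$, I would form the $(n-1)\times n$ matrix $M$ with $M_{ij}=e_j(x_i)$ and set $\delta_i=(-1)^i\det(M_{\hat{i}})$, the signed minors obtained by deleting column $i$. Laplace expansion gives the identity $M\cdot(\delta_1,\dots,\delta_n)^T=0$, so $(\delta_1,\dots,\delta_n)\in \ker M$. The key observation is that each $\delta_i$ arises from a multilinear expression in the basis functions $e_j$ evaluated at the $x_k$ that depends only on $n$ and $i$; this expression is an element $P_i\in T^{n-1}V$, independent of $F$ and of the chosen points, so that $\delta_i = P_i(x_1,\dots,x_{n-1})$.

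Verification of the lemma for given $F$ and $x_1,\dots,x_{n-1}$ then splits according to $\operatorname{rank}(M)$. If $\operatorname{rank}(M)=n-1$, the kernel of $M$ is $1$-dimensional, so $(\delta_1,\dots,\delta_n)$ is proportional to $(e_1^*(Q),\dots,e_n^*(Q))$; hence $P := \sum_i P_i(x_1,\dots,x_{n-1})\,e_i$ is a scalar multiple of $Q$ and therefore satisfies $P|_F\equiv 0$. If $\operatorname{rank}(M)<n-1$, every $(n-1)\times(n-1)$ minor of $M$ vanishes, all $\delta_i$ are zero, and $P\equiv 0$ trivially. For the final assertion, the linear independence of $e_1|_F,\dots,e_{n-1}|_F$ on $F$ lets me inductively select $x_1,\dots,x_{n-1}\in F$ such that the restrictions to $\{x_1,\dots,x_{n-1}\}$ remain linearly independent, making $\operatorname{rank}(M)=n-1$ and $P$ a nonzero scalar multiple of $Q$.

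The only subtlety will be articulating the universality of the $P_i$: the same tensors must work uniformly across all $F$ with $c(F)=n$, which forces the construction to be purely in terms of the cofactor expansion of a generic $(n-1)\times n$ matrix of basis values. Beyond this, the argument is routine linear algebra, and I do not foresee any substantial obstacle.
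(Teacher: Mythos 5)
Your proposal is correct and follows essentially the same route as the paper: the signed maximal minors of the $(n-1)\times n$ matrix $[e_j(x_i)]$ are recognised as evaluations of universal tensors $P_i\in T^{n-1}V$, the rank-$(n-1)$ case gives $P$ proportional to the unique relation $Q$ (hence $P\vert_F\equiv 0$, and nonzero for a suitable choice of points), and the lower-rank case gives $P\equiv 0$. No gaps to report.
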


\subsection{Proof of Theorem \ref{thm_reg}}

\begin{proof}[Proof of Theorem \ref{thm_reg}]
Let $S:=X\times \K$ and let $V$ be the vector space of all polynomials on $X\times \K$ which are of degree at most one with respect to the second variable (i.e. of functions of the form $P_1(x)+P_2(x)t$, where $P_1,P_2\in\mathcal{P}(X)$). Fix some basis $(e_i)_{i\in\mathbb{N}}$ of $V$. For $Q\in V\backslash\{0\}$ define $c(Q)$ as in Lemma \ref{main_lemma} and for a subset $F\subset X\times K$ define $d(F)$ also as in Lemma \ref{main_lemma}.

Let $f:X\times Y\rightarrow \K$ be a function satisfying the assumptions of Theorem \ref{thm_reg}. By assumption $Y$ is algebraically of second category, so there exists a natural number $n$ such that the set
\begin{equation*}
    B:=\{y\in Y:c(\Gamma_{f(\cdot,y)})=n\}
\end{equation*}
is Zariski dense in $Y$. Fix $y_0\in B$. Applying Lemma \ref{main_lemma} with $F=\Gamma_{f(\cdot,y_0)}$, we can find polynomials $(P_i)_{i=1,\dots,n}$ in $\mathcal{P}((X\times\K)^{n-1})$ such that for all points $x_1,\dots,x_{n-1}\in X$ the polynomial
\begin{equation*}
    P_{y_0}:=\sum_{i=1}^n P_i(x_1,f(x_1,y_0),\dots,x_{n-1},f(x_{n-1},y_0))e_i\in V
\end{equation*}
satisfies $P_{y_0}(x,f(x,y_0))=0$ for $x\in X$. Now the set of tuples $(x_1,\dots,x_{n-1})\in X^{n-1}$ for which $P_{y_0}$ is non-trivial is Zariski open in $X^{n-1}$ and by assumption is non-empty, so we can find such a tuple which belongs to $A^{n-1}$. Fix such a choice of $(x_1,\dots,x_{n-1})$. 

The equation $P_y(x,f(x,y))=0$ by construction holds for every $(x,y)\in X\times B$. As the coefficients of $P_y$ are regular functions in the variable $y$, fixing $x\in A$ we conclude that $P_y(x,f(x,y))=0$ holds for all $(x,y)\in A \times Y$. Now fixing $y$ we see that it holds for all $(x,y)\in X\times Y$. This shows that
\begin{equation*}
    Q(x,y,t):=P_y(x,t)
\end{equation*}
is a non-trivial regular function of three variables $(x,y,t)\in X\times Y\times \K$ which vanishes on the graph of $f$ and is of degree at most one with respect to $t$. This gives the desired rational representation of $f$.
\end{proof}

\subsection{Proof of Theorem \ref{thm_Nash}}
Throughout this section let $X,Y$ be two semialgebraic open subsets of $R^{n_X}$ and $R^{n_Y}$ respectively for some $n_X,n_Y\geq 0$, with $X$ semialgebraically connected, as in Theorem \ref{thm_Nash}. First we need to repeat the proof of Theorem \ref{thm_reg} to arrive with a similar result in the Nash case.

\begin{lem}\label{graph_is_salgebraic}
Let $f:X\times Y\rightarrow R$ satisfy the assumptions of Theorem \ref{thm_Nash}. Then the graph of $f$ is contained in a semialgebraic subset of $X\times Y\times R$ of codimension one.
\begin{proof}
By the semialgebraic cell decomposition (\cite[Proposition 9.1.8]{RAG}) $Y$ can be written as a finite disjoint union of semialgebraic subsets, each Nash diffeomorphic to $R^n$ for some $n\leq n_Y$. Hence without loss of generality we might assume that $Y=R^{n_Y}$.

Let $S:=X\times R$ and let $V$ be the vector space of all functions on $X\times R$ which are restrictions of polynomials from $R^{n_X+1}$. Fix some basis $(e_i)_{i\in\mathbb{N}}$ of $V$. For $Q\in V\backslash\{0\}$ define $c(Q)$ as in Lemma \ref{main_lemma} and for a subset $F\subset X\times K$ define $d(F)$ also as in Lemma \ref{main_lemma}.

As for every $y\in Y$ the function $f(\cdot,y)$ is Nash we have that $c(f(\cdot,y))<\infty$, so by Example \ref{alg_second_category} there exists a natural number $n$ such that the set
\begin{equation*}
    B:=\{y\in Y:c(\Gamma_{f(\cdot,y)})=n\}
\end{equation*}
is Zariski dense in $Y$. Fix $y_0\in B$. Applying Lemma \ref{main_lemma} with $F=\Gamma_{f(\cdot,y_0)}$, we find polynomial functions $(P_i)_{i=1,\dots,n}\in \mathcal{P}(R^{(n_X+1)(n-1)})$ such that for all points $x_1,\dots,x_{n-1}\in X$ the function
\begin{equation*}
    P_{y_0}:=\sum_{i=1}^n P_i(x_1,f(x_1,y_0),\dots,x_{n-1},f(x_{n-1},y_0))e_i\in V
\end{equation*}
satisfies $P_{y_0}(x,f(x,y_0))=0$ for $x\in X$. Now the set of tuples $(x_1,\dots,x_{n-1})\in X^{n-1}$ for which $P_{y_0}$ is the trivial polynomial is the zero set of some Nash function on $X^{n-1}$ and by assumption is non-empty. This means that we can find a tuple which belongs to $A^{n-1}$ for which $P_{y_0}$ is non-trivial. Fix such a choice of $(x_1,\dots,x_{n-1})$. 

The equation $P_y(x,f(x,y))=0$ by construction holds for every $(x,y)\in X \times B$. As the coefficients of $P_y$ are Nash functions in the variable $y$, fixing $x\in A$ we conclude that $P_y(x,f(x,y))=0$ holds for all $(x,y)\in A\times Y$. Now fixing $y$ we see that it holds for all $(x,y)\in X\times Y$. This shows that
\begin{equation*}
    Q(x,y,t):=P_y(x,t)
\end{equation*}
is a non-trivial Nash function of three variables $(x,y,t)\in X\times Y\times R$ which vanishes on the graph of $f$. As $X\times Y\times R$ is semialgebraically connected, the zero set of $Q$ is the desired set of codimension one containing $\Gamma_f$.
\end{proof}
\end{lem}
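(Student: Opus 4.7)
The plan is to mimic the proof of Theorem~\ref{thm_reg}, adapting the polynomial argument to the Nash setting. I would set $S := X \times R$ and take $V$ to be the space of restrictions to $S$ of polynomials in $n_X + 1$ variables, with some chosen countable basis $(e_i)_{i \in \mathbb{N}}$. Since every slice $f(\cdot, y)$ is Nash, hence semialgebraic, its graph lies inside the zero set of some nonzero polynomial, so $c(\Gamma_{f(\cdot, y)}) < \infty$ for each $y \in Y$. To invoke Example~\ref{alg_second_category}, I would first reduce to the case that $Y$ is Nash-diffeomorphic to some $R^m$ via the semialgebraic cell decomposition \cite[Proposition 9.1.8]{RAG}; since $R$ is uncountable, $R^m$ is algebraically of the second category, and stratifying $Y$ by the value of $c(\Gamma_{f(\cdot, y)})$ yields some $n$ for which $B := \{y \in Y : c(\Gamma_{f(\cdot, y)}) = n\}$ is Zariski dense in $Y$.

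With $y_0 \in B$ fixed, applying Lemma~\ref{main_lemma} to $F = \Gamma_{f(\cdot, y_0)}$ produces polynomials $P_1, \dots, P_n$ on $R^{(n_X+1)(n-1)}$, and I would need to choose auxiliary points $x_1, \dots, x_{n-1}$ inside $A^{n-1}$ so that the corresponding element
\begin{equation*}
P_{y_0} := \sum_{i=1}^n P_i\bigl(x_1, f(x_1, y_0), \dots, x_{n-1}, f(x_{n-1}, y_0)\bigr) e_i \in V
\end{equation*}
is non-trivial. The tuples yielding a non-trivial $P_{y_0}$ form the complement of the zero set of a non-zero Nash function on $X^{n-1}$; since any non-zero Nash function satisfies a polynomial identity in its variable and hence has zero set contained in a proper algebraic subvariety, and since $A$ being Zariski dense in $R^{n_X}$ implies $A^{n-1}$ is Zariski dense in $R^{(n-1)n_X}$, such a tuple can be located inside $A^{n-1}$.

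The main obstacle is the final propagation step: the identity $P_y(x, f(x, y)) = 0$ holds by construction on $X \times B$ with $P_y$ defined by the same formula as $P_{y_0}$ but with $y_0$ replaced by $y$, and it needs to be extended to $X \times Y$. The coefficients of $P_y$ are Nash in $y$ because each $f(x_j, \cdot)$ with $x_j \in A$ is Nash, so fixing $x \in A$ the expression $P_y(x, f(x, y))$ is a Nash function of $y$ vanishing on the Zariski dense set $B$ and therefore on all of $Y$; fixing $y \in Y$ it is a Nash function of $x$ vanishing on the Zariski dense set $A$ and therefore on $X$. The resulting $Q(x, y, t) := P_y(x, t)$ is then a non-trivial Nash function on $X \times Y \times R$ that vanishes on $\Gamma_f$, and semialgebraic connectedness of $X \times Y \times R$ forces its zero set to be a codimension-one semialgebraic subset containing $\Gamma_f$.
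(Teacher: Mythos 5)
Your proposal is correct and follows essentially the same route as the paper: reduction of $Y$ via the cell decomposition so that Example \ref{alg_second_category} applies, Lemma \ref{main_lemma} applied to $\Gamma_{f(\cdot,y_0)}$ with the auxiliary tuple chosen in $A^{n-1}$, the same Nash-in-$y$/Nash-in-$x$ propagation from $X\times B$ to $X\times Y$, and the same conclusion from the zero set of the non-trivial Nash function $Q$. The only difference is that you spell out in slightly more detail why a suitable tuple exists in $A^{n-1}$, which the paper leaves implicit.
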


Now we will introduce a few auxiliary results which will allow us to show that Theorem \ref{thm_Nash} follows from Lemma \ref{graph_is_salgebraic}.

\begin{obs}\label{cont_ext}
Let $U\subset X\times Y$ be a semialgebraic set such that $U\cap (X\times \{y\})$ is dense in $X\times \{y\}$ for every $y\in Y$. Let $g:U\rightarrow R$ be a semialgebraic function. For $y_0\in Y$ we define $g(\cdot,y_0)$ as the function $x\mapsto g(x,y_0)$ defined on the set $\{x\in X: (x,y_0)\in U\}$. Then the set
\begin{equation*}
    Y_0:=\{y\in Y: g(\cdot,y)\text{ can be extended to a continuous function on }X\}
\end{equation*}
is semialgebraic and moreover the unique extension of $g\vert_{U\cap (X\times Y_0)}$ to $X\times Y_0$ which is continuous with respect to the first variable is semialgebraic. 
\begin{proof}
Consider the embedding $R\subset \mathbb{P}^1(R)=R\cup\{\infty\}$ in the projective space. As the operation of taking the closure of a set is definable, the following set is semialgebraic:
\begin{equation*}
    G:=\{(x,y,t)\in X\times Y\times \mathbb{P}^1(R):(x,t)\in \overline{\Gamma_{f(\cdot,y)}}\}.
\end{equation*}
Now $Y_0$ can be written as
\begin{equation*}
    Y_0=\{y\in Y: \forall_{x\in X} (x,y,\infty)\not\in G \wedge \exists !_{t\in R} \,(x,y,t)\in G\}
\end{equation*}
so it is semialgebraic. Finally the graph of the extension of $g\vert_{U\cap (X\times Y_0)}$ is precisely $G\cap(X\times Y_0\times R)$ so the function is semialgebraic.
\end{proof}
\end{obs}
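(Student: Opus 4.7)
\emph{Plan.} The natural tool is Tarski--Seidenberg: any set or function defined by a first-order formula in the language of ordered fields, with semialgebraic parameters, is semialgebraic. My strategy is therefore to describe both $Y_0$ and the sought extension by explicit first-order formulas.

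The first step is to build the ``pointwise limit'' set
\begin{equation*}
Z := \Bigl\{(x,y,t) \in X \times Y \times R : \forall \varepsilon > 0\; \exists \delta > 0\; \forall x' \in X\colon \bigl((x',y) \in U \wedge |x-x'| < \delta\bigr) \Rightarrow |g(x',y) - t| < \varepsilon \Bigr\}.
\end{equation*}
Since $U$ is semialgebraic, $g$ is semialgebraic, and every quantified variable ranges over a semialgebraic set, $Z$ is semialgebraic by quantifier elimination. The density hypothesis $\overline{U \cap (X \times \{y\})} \supset X \times \{y\}$ guarantees uniqueness of $t$ whenever it exists, so $Z$ is in fact the graph of a partial function $\tilde g \colon D \to R$, where $D := \pi_{X \times Y}(Z) \subset X \times Y$ is a semialgebraic set.

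Next I would observe that a point $y \in Y$ lies in $Y_0$ precisely when the slice function $g(\cdot, y)$ has a finite one-sided limit at every $x \in X$ along the dense subset $U \cap (X \times \{y\})$; by the previous step this is equivalent to $(x,y) \in D$ for every $x \in X$. Thus
\begin{equation*}
Y_0 = \{y \in Y : \forall x \in X,\; (x,y) \in D\},
\end{equation*}
which is again semialgebraic. Moreover the continuous-in-$x$ extension of $g\vert_{U \cap (X \times Y_0)}$ is exactly $\tilde g\vert_{X \times Y_0}$, whose graph $Z \cap (X \times Y_0 \times R)$ is semialgebraic, so the extension itself is semialgebraic.

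The only delicate point I expect is to justify that the formula defining $Z$ faithfully encodes ``$t$ is the continuous limit of $g(\cdot, y)$ at $x$,'' including that no limits at infinity sneak in. The formula automatically excludes infinite limits because $t$ must lie in $R$, and the density of $U \cap (X \times \{y\})$ in $X \times \{y\}$ yields both uniqueness of the limit and the fact that the extended function is genuinely continuous in the first variable on all of $X$. Once this is checked, everything else is a routine application of Tarski--Seidenberg.
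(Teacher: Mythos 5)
Your proof is correct and follows essentially the same route as the paper: you encode the limit of the slice $g(\cdot,y)$ at each $x\in X$ by a first-order formula and invoke Tarski--Seidenberg, which is exactly what the paper does by taking the definable closure of the graph in $X\times\mathbb{P}^1(R)$ (the compactification only serves to exclude infinite cluster values, which your restriction $t\in R$ handles directly). The step both arguments leave implicit --- that existence of a finite limit at every point of $X$, together with density of $U\cap(X\times\{y\})$, forces the limit function to be continuous in $x$ and to agree with $g$ on $U$ --- is the standard $\varepsilon$-$\delta$ argument, valid over any real closed field, so there is no gap.
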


\begin{prop}\label{cont_is Nash}
Let $g:X\times Y\rightarrow R$ be a semialgebraic function, continuous with respect to the variable $x\in X$. Then the set of points $y\in Y$ for which $g(\cdot,y)$ is Nash is a semialgebraic subset of $Y$.
\begin{proof}
According to \cite[Theorem 8.10.5]{RAG} for a fixed $g$ there exists a natural number $r$ such that for $y\in Y$, $g(\cdot,y)$ is Nash if and only if it is of class $\mathcal{C}^r$. This condition can easily be verified by a first order formula as it depends only on finitely many partial derivatives of $g(\cdot,y)$. 
\end{proof}
\end{prop}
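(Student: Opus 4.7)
My plan is to reduce infinite differentiability, which a priori involves a countable conjunction of conditions, to a single finite-order differentiability condition, and then observe that finite-order differentiability of a semialgebraic function is expressible in first-order logic.

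First I would invoke \cite[Theorem 8.10.5]{RAG} applied to $g$ itself, viewed as a semialgebraic function on $X\times Y$. The content of that theorem is that for any semialgebraic function one can find a single natural number $r$, depending only on the combinatorial and degree data of the defining semialgebraic description, such that being Nash and being of class $\mathcal{C}^r$ coincide. Crucially, this $r$ can be chosen uniformly in the parameter $y$: the slice $g(\cdot,y)$ has semialgebraic graph whose description is obtained by specialising the variables of $y$ in a fixed description of the graph of $g$, so no complexity blow-up occurs, and the resulting $r=r(g)$ works simultaneously for every $y\in Y$. Thus
\begin{equation*}
    Y_0=\{y\in Y : g(\cdot,y)\text{ is Nash}\}=\{y\in Y : g(\cdot,y)\text{ is }\mathcal{C}^r\text{ on }X\}.
\end{equation*}

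Next I would write the condition on the right-hand side as a first-order formula in the language of ordered fields. Being of class $\mathcal{C}^r$ at a point $x\in X$ amounts to the existence of all partial derivatives up to order $r$ (an $\epsilon$-$\delta$ statement applied iteratively to the difference quotients of $g(\cdot,y)$) together with continuity of these derivatives (again an $\epsilon$-$\delta$ statement). Since $g$ is semialgebraic, each difference quotient and each candidate partial derivative can be evaluated through $g$ itself, so the whole condition involves only finitely many quantifiers over $R$ applied to polynomial inequalities in the coordinates of $x$, $y$, and the auxiliary $\epsilon,\delta$ and test directions. Applying the Tarski–Seidenberg principle then shows that $Y_0$ is semialgebraic.

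I do not expect a real obstacle: the proposition is essentially a packaging statement once \cite[Theorem 8.10.5]{RAG} is available. The one point that requires care is verifying that the integer $r$ given by that theorem can be chosen uniformly for the whole family $\{g(\cdot,y)\}_{y\in Y}$, but this is immediate from the way that result is formulated in terms of the complexity of the semialgebraic description rather than in terms of pointwise data.
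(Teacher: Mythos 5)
Your proposal is correct and follows essentially the same route as the paper: invoke \cite[Theorem 8.10.5]{RAG} to get a single exponent $r$, uniform in $y$ for the fixed semialgebraic function $g$, and then express the $\mathcal{C}^r$ condition on $g(\cdot,y)$ by a first-order formula (finitely many partial derivatives, eliminated via Tarski--Seidenberg). Your extra remarks on the uniformity of $r$ just make explicit what the paper's one-line proof takes for granted.
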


\begin{proof}[Proof of Theorem \ref{thm_Nash}]
Let $f:X\times Y\rightarrow R$ satisfy the assumptions of Theorem \ref{thm_Nash}. We will use induction on the dimension of $Y$, the case $n_Y=0$ being trivial. Let $Z\subset Y$ be a semialgebraic closed subset of $Y$ of codimension at least one. Thanks to the cell decomposition (\cite[Proposition 9.1.8]{RAG}), $Z$ can be written as a disjoint union of its semialgebraic subsets, each of them Nash diffeomorphic to $R^n$ for some $n<n_Y$. Now applying the induction hypothesis to each of the cells separately we get that $f\vert_{X\times Z}$ is semialgebraic. This means that every time in the proof we encounter a semialgebraic subset of $Y$ of codimension at least one, relaying on the induction hypothesis we can delete its closure $Z$ from $Y$ and consider $Y'=Y\backslash Z$ instead.

Let $F\subset X\times Y\times R$ be a semialgebraic set of codimension one containing the graph of $f$, which exists by Lemma \ref{graph_is_salgebraic}. By the cell decomposition (\cite[Proposition 9.1.12]{RAG}), we can find a finite family $(U_i)_{i=1,\dots,n}$ of semialgebraic open disjoint sets covering a dense subset of $X\times Y$ that for each $1\leq i \leq n$ there exists a finite number of semialgebraic continuous functions $(\xi_{i,j})_{j=1,\dots,n_i}:U_i\rightarrow R$ such that for $(x,y)\in U_i$ we have
\begin{gather*}
    1\leq j_1<j_2\leq n_i \implies \xi_{i,j_1}(x)<\xi_{i,j_2}(x) \\
    \text{for } t\in R, (x,y,t)\in F \iff t=\xi_{i,j}(x,y)\text{ for some }j.
\end{gather*}
Subdividing the family further, by \cite[Lemma 8.10.6]{RAG} we can assume that for each $y\in Y$ and $1\leq i \leq n$ we have that the set $U_i\cap (X\times \{y\})$ is either empty or semialgebraically connected. Define $U:=\bigcup U_i$ and $C:=X\times Y \backslash U$.

Consider
\begin{equation*}
    \{y\in Y: \dim (C\cap (X\times \{y\}))=n_X\}.
\end{equation*}
It is a semialgebraic subset of $Y$ of codimension at least one. Relying on the induction hypothesis, as stated at the beginning of the proof, we might delete its closure from $Y$ to assume that
\begin{equation*}
    \text{ for every }y\in Y,\; U\cap (X\times \{y\})\text{ is dense in }X\times\{y\}.
\end{equation*}
Similarly now consider 
\begin{equation*}
    \{x\in X: \dim(C\cap (\{x\}\times Y))=n_Y\}.
\end{equation*}
It is a semialgebraic subset of $X$ of codimension at least one. This means that we can find some $x_0\in A$ such that
\begin{equation*}
    \dim(C\cap (\{x_0\}\times Y))<n_Y.
\end{equation*}
Again relying on the induction hypothesis, we can delete the set $\{y\in Y:(x_0,y)\in C\}$ from $Y$ to assume that
\begin{equation}\label{x_0_capC=empty}
    C\cap (\{x_0\}\times Y)=\emptyset.
\end{equation}

Now for every collection of natural numbers $K=(k_i)_{i=1,\dots,n}$ satisfying $1\leq k_i\leq n_i$ for every $i$ define the semialgebraic function $ g_K:U\rightarrow R$ by
\begin{equation*}
    g_K(x,y):=\begin{cases}
        \xi_{1,k_1}(x,y) & \text{ for }(x,y)\in U_1 \\
        \dots \\
        \xi_{n,k_n}(x,y) & \text{ for }(x,y)\in U_n.
    \end{cases}
\end{equation*}
Consider
\begin{equation*}
    B_K:=\{y\in Y: g_K(\cdot,y)\text{ can be extended to a continuous function on X}\}.
\end{equation*}
By Observation \ref{cont_ext} the set is semialgebraic. Let $\tilde{g}_K$ denote the extension of $g_K$ to $X\times B_k$ which is continuous with respect to the variable $x$. Again by Observation \ref{cont_ext} the function is semialgebraic. Finally consider:
\begin{equation*}
    \widetilde{B}_K:=\{y\in Y: \tilde{g}_K(\cdot,y)\text{ is Nash and } \tilde{g}_K(x_0,y)=f(x_0,y)\}
\end{equation*}
By Proposition \ref{cont_is Nash} the set is semialgebraic.

Now fix $y_0\in Y$. As the function $f(\cdot,y_0)$ is continuous and the sets $U_i\cap (X \times \{y_0\})$ are semialgebraically connected, for every $i$ we must have that there exists $1\leq j \leq n_i$ such that $f(x,y_0)=\xi_{i,j}(x,y_0)$ holds for $x$ with $(x,y_0)\in U_i$. This implies that there exists a multiindex $K$ such that $y_0\in \widetilde{B}_k$ and $\tilde{g}_K(x,y_0)=f(x,y_0)$ holds for $x\in X$.

This in particular shows that the sets $\widetilde{B}_K$ cover the entire $Y$. Note that if $y\in \widetilde{B}_K\cap \widetilde{B}_{K'}$ for some two multiindices $K$ and $K'$ then 
\begin{equation*}
    \tilde{g}_K(x,y)=\tilde{g}_{K'}(x,y)\text{ for }x\in X
\end{equation*}
as both of the functions are Nash with respect to $x$ and agree on a neighbourhood of $x_0$ (thanks to (\ref{x_0_capC=empty})). This shows that $f$ can be written as
\begin{equation*}
    f(x,y)=
    \begin{cases}
        \tilde{g}_{K_1}(x,y) & \text{ for }y\in \tilde{B}_{K_1}\\
        \dots \\
        \tilde{g}_{K_N}(x,y) & \text{ for }y\in \tilde{B}_{K_N}
    \end{cases}
\end{equation*}
where $(K_i)_{i=1,\dots,N}$ ranges through all the possible multiindices. This proves that $f$ is semialgebraic. 
\end{proof}
\section{Acknowledgements}
The author was partially supported by the National Science Centre (Poland) under grant number 2022/47/B/ST1/00211.

\normalsize

\end{document}